\documentclass[a4paper]{article}
\usepackage{amsmath,amsthm,amssymb,stmaryrd}
\usepackage{hyperref}
\usepackage{todonotes}

\newtheorem{theorem}{Theorem}[section]
\newtheorem{proposition}[theorem]{Proposition}
\newtheorem{lemma}[theorem]{Lemma}

\newtheorem{remark}[theorem]{Remark}
\theoremstyle{definition}

\newtheorem{definition}[theorem]{Definition}

\newcommand{\names}{\mathbb{A}}
\newcommand{\nat}{\mathbb{N}}
\newcommand{\perm}{\operatorname{Perm}}
\newcommand{\powset}{\mathcal{P}}
\newcommand{\perma}{\operatorname{Perm}(\names)}
\newcommand{\zf}{\mathbf{ZF}}
\newcommand{\izf}{\mathbf{IZF}}
\newcommand{\czf}{\mathbf{CZF}}

\newcommand{\set}{\mathbf{Set}}
\newcommand{\wlpo}{\mathbf{WLPO}}

\title{Some Brouwerian Counterexamples Regarding Nominal Sets in
  Constructive Set Theory} 

\author{Andrew W Swan}

\begin{document}

\maketitle

\begin{abstract}
  The existence of least finite support is used throughout the subject
  of nominal sets. In this paper we give some Brouwerian
  counterexamples showing that constructively, least finite support
  does not always exist and in fact can be quite badly behaved. On
  this basis we reinforce the point that when working constructively
  with nominal sets the use of least finite support should be
  avoided. Moreover our examples suggest that this problem can't be
  fixed by requiring nominal sets to have least finite support by
  definition or by using the notion of subfinite instead of finite.
\end{abstract}

\section{Constructive Set Theory}
\nocite{pittsnomsets}
\nocite{aczelrathjen}

We work in a constructive set theory such as $\czf$, as described by
Aczel and Rathjen in \cite{aczelrathjen}. A detailed knowledge of the
axioms of $\czf$ is not required for this paper; we only require the
following facts. Firstly, the underlying logic of $\czf$ is
intuitionistic, so we do not assume excluded middle as an axiom, and
so can only use instances of excluded middle that can be derived from
the other axioms. Secondly, $\czf$ includes the axiom schema of
bounded separation, so for any set $X$ and any bounded formula (i.e. a
formula where each quantifier can be written in the form
$\forall y \in z$ or $\exists y \in z$) we can form the set
$\{ x \in X \;|\; \phi \}$. Note that $\phi$ may or may not include
$x$ as a free variable. Finally, $\czf$ includes the axiom of
extensionality, so to show two sets are equal amounts to showing they
contain exactly the same elements. The results in this paper also
apply to variants of $\czf$, such as $\izf$, which is also defined in
\cite{aczelrathjen}.

We will use the following terminology conventions for concepts in
constructive set theory.

\begin{definition}
  \label{def:1}
  \leavevmode
  \begin{enumerate}
  \item We say a set $X$ has \emph{decidable equality} to mean $X$ is
    a set such that for all $x, y \in X$ we have either $x = y$ or $x
    \neq y$.
  \item If $Y \subseteq X$, we say $Y$ is a \emph{decidable subset} of
    $X$ to mean that for all $x \in X$, either $x \in Y$ or $x \notin
    Y$.
  \item The natural number $n$ is equal to the set $\{m \;|\; m < n
    \}$ and so is itself a set with $n$ elements. In particular $0$ is
    the empty set. We write $\nat$ for the set of natural numbers.
  \item A set $X$ is \emph{finite} if for some $n \in \nat$ there
    exists a bijection from $n$ to $X$.
  \item A set $X$ is \emph{finitely enumerable} if for some $n \in \nat$
    there exists a surjection from $n$ to $X$.
  \item A set $X$ is \emph{subfinite} if it is a subset of some
    finitely enumerable set.
  \item A set $X$ is \emph{infinite} if for every finite set $Y
    \subseteq X$ there exists $x \in X$ such that $x \notin Y$.
  \item We say a set $X$ is \emph{inhabited} if there exists some set
    $x$ such that $x \in X$.
  \end{enumerate}
\end{definition}

We recall the following basic results about finite sets.

\begin{proposition}
  Let $X$ be a set with decidable equality.
  \begin{enumerate}
  \item Let $Y$ be a subset of $X$. Then $Y$ is finite if and only if
    it is finitely enumerable.
  \item If a subset $Y$ of $X$ is finite then it is a decidable subset
    of $X$.
  \item Finite subsets of $X$ are closed under binary union and
    intersection.
  \end{enumerate}
\end{proposition}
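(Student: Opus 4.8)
The plan is to prove the three parts in order, with the crux being the backward direction of part (1), from which the other parts follow relatively easily.

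For part (1), the forward direction is immediate, since any bijection $n \to Y$ is in particular a surjection; this needs neither $Y \subseteq X$ nor decidable equality. For the converse I would argue by induction on $n$, where $f \colon n \to Y$ is a given surjection. The case $n = 0$ forces $Y$ to be empty. For the successor case $f \colon n+1 \to Y$, the idea is to test whether the last value $f(n)$ is a repeat: since $X$, and hence $Y$, has decidable equality, the bounded disjunction $\exists i < n.\, f(i) = f(n)$ is decidable. If it holds, then the restriction $f|_n$ still surjects onto $Y$, and the induction hypothesis applies directly. If it fails, then $f|_n$ surjects onto the set $\{ y \in Y \mid y \neq f(n)\}$, which is a set by bounded separation and is finite by the induction hypothesis via some bijection $g \colon m \to \{ y \in Y \mid y \neq f(n)\}$; extending $g$ by $m \mapsto f(n)$ yields a bijection $m + 1 \to Y$. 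This inductive duplicate-removal is the main obstacle, and decidable equality is exactly what makes the repeat-test decidable.

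Part (2) follows from decidable equality together with part (1). Given a bijection $f \colon n \to Y$ and any $x \in X$, membership $x \in Y$ is equivalent to the bounded disjunction $\exists i < n.\, x = f(i)$, each disjunct of which is decidable by decidable equality of $X$. A bounded disjunction of decidable propositions is itself decidable, by a short induction on $n$, so $Y$ is a decidable subset of $X$.

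For part (3) I would first record the auxiliary fact that a decidable subset $S$ of a finite set is finite: transporting along a bijection we may assume $S \subseteq n$, and then induct on $n$, at each successor step using decidability of $n \in S$ to decide whether to extend the bijection already obtained for $S \cap n$. Closure under union is then easy: if $f \colon n \to Y$ and $g \colon m \to Z$ are bijections, the evident map $n + m \to Y \cup Z$ sending $i \mapsto f(i)$ for $i < n$ and $n + j \mapsto g(j)$ for $j < m$ is a surjection, so $Y \cup Z$ is a finitely enumerable subset of $X$ and hence finite by part (1). For intersection, part (2) shows that $Z$ is a decidable subset, so $Y \cap Z = \{ y \in Y \mid y \in Z \}$ is a decidable subset of the finite set $Y$, which is finite by the auxiliary fact.
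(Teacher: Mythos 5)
Your proof is correct and follows the same route the paper indicates: the paper's proof is just the remark that ``these statements can be proved by induction on the size of the sets'' (deferring details to Aczel--Rathjen, Chapter 8), and your argument is exactly such an induction, using decidable equality for the repeat-test in part (1), decidability of bounded disjunctions in part (2), and the decidable-subset-of-a-finite-set lemma in part (3). You have simply filled in the details the paper outsources, and all the steps check out constructively.
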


\begin{proof}
  These statements can be proved by induction on the size of the
  sets. See \cite[Chapter 8]{aczelrathjenbookdraft} for more details.
\end{proof}

\begin{definition}
  \label{def:2}
  The \emph{weak limited principle of omniscience}, $\wlpo$ is the
  following axiom. Let $\alpha \colon \nat \rightarrow 2$ be any
  binary sequence. The statement $(\forall n \in \nat)\,\alpha(n) = 0$
  is either true or false.
\end{definition}

\begin{theorem}
  \label{thm:1}
  $\wlpo$ is not provable in $\czf$.
\end{theorem}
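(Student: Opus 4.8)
The plan is to prove independence by exhibiting a model of $\czf$ in which $\wlpo$ fails, and then invoke soundness. The cleanest vehicle is Kleene number realizability: one builds the realizability universe over the first Kleene algebra (partial recursive application), interpreting each formula $\phi$ by the set of natural numbers realizing it. It is a standard fact, going back to McCarty's realizability interpretation of $\izf$ and adapted to the predicative setting in the Aczel--Rathjen account \cite{aczelrathjen}, that this interpretation is \emph{sound} for $\czf$: every theorem of $\czf$ is realized by some natural number. Consequently, to prove the theorem it suffices to show that $\wlpo$ is \emph{not} realized, and the entire argument reduces to a recursion-theoretic computation about a single formula.

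The heart of the matter is the realizability clause for disjunction: a realizer of $A \vee B$ is (coded as) a pair whose first component $i \in \{0,1\}$ decides which disjunct holds and whose second component realizes that disjunct. Writing $P(\alpha)$ for the formula $\forall n \in \nat\,\alpha(n) = 0$, suppose for contradiction that some index $e$ realizes $\wlpo$, i.e.\ realizes $\forall \alpha\,(P(\alpha) \vee \neg P(\alpha))$. Then, given a total recursive binary sequence $\alpha$ together with an index for it, one can uniformly compute a realizer of ``$\alpha$ is a function $\nat \to 2$'', feed it to $e$, and read off the first component of the resulting pair; this yields an effective procedure that decides, from an index for $\alpha$, whether $P(\alpha)$ holds. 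I would then diagonalize against the halting problem: for each $e$ set $\alpha_e(n) = 0$ if the $e$-th machine has not halted on input $e$ within $n$ steps, and $\alpha_e(n) = 1$ otherwise. Each $\alpha_e$ is total recursive, the construction is uniform in $e$, and $P(\alpha_e)$ holds precisely when machine $e$ fails to halt on $e$. The decision procedure extracted from $e$ would therefore solve the halting problem, a contradiction. Hence no realizer of $\wlpo$ exists.

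The main obstacle is not the diagonal argument, which is routine, but the verification that the realizability interpretation genuinely validates \emph{all} of $\czf$---in particular bounded separation, strong collection, and set induction---so that soundness may legitimately be applied to conclude $\czf \not\vdash \wlpo$. Rather than reprove this, I would cite the established construction and its soundness theorem. The one point requiring care is the passage between an arbitrary binary sequence as seen inside the model and a recursive index for it, together with the uniform construction of the realizer witnessing that $\alpha$ is a function $\nat \to 2$; this is exactly where the recursive character of the realizability universe is used, and it is precisely the step at which the classically trivial instance of excluded middle embodied by $\wlpo$ fails to be realized. An alternative, more proof-theoretic route would appeal to the behaviour of $\czf$ on arithmetic statements and to the known unprovability of $\wlpo$ in Heyting arithmetic, but the realizability argument is more self-contained and exhibits the failure directly.
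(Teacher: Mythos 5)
Your proposal is correct and takes essentially the same route as the paper, which likewise settles the theorem by appeal to Kleene-style realizability --- citing Rathjen's realizability model for $\czf$ specifically --- together with the observation that $\wlpo$ would force the existence of non-computable functions. You merely spell out the halting-problem diagonalization and the soundness step that the paper's two-sentence proof leaves implicit (with the minor caveat that the soundness of realizability for $\czf$ is due to Rathjen's paper rather than the Aczel--Rathjen notes).
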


\begin{proof}
  $\wlpo$ fails in Kleene realizability models, for example, since it
  implies the existence of non computable functions. For $\czf$
  specifically, see the realizability model by Rathjen in
  \cite{rathjen06}.
\end{proof}

If we wish to show that a certain statement is not provable
constructively, one way to do this is to use it to derive a principle
(in this case $\wlpo$) that is already known to be non constructive. This
kind of proof is known as Brouwerian counterexample.

\section{Nominal Sets}

Nominal sets were introduced by Gabbay and Pitts to give an abstract
notion of names and binding. Since then they have been applied many
places in computer science (see \cite{pittsnomsets}). Recently they
have seen applications in the semantics of homotopy type theory (see
\cite{pittsnompcs} and \cite{swannomawfs}).

We recall the following definitions from \cite{pittsnomsets}. Let
$\names$ be an infinite set with decidable equality (which we will
refer to as the set of \emph{names}). Write $\perma$ for the group of
finite permutations (that is, permutations $\pi$ such that $\pi(a) =
a$ for all but finitely many $a \in \names$). Note that since we are
assuming $\names$ has decidable equality, $\perma$ is precisely the
group generated by transpositions (i.e. swapping two elements and fixing
everything else). Recall that a $\perma$-set is a set $X$, together
with an action of $\perma$ on $X$, or equivalently a presheaf over
$\perma$ when viewed as a one object category in the usual way.

\begin{definition}[Pitts, Gabbay]
  \leavevmode
  \begin{enumerate}
  \item Let $X$ be a $\perma$-set (writing $\cdot$ for
    the action) and let $x \in X$. We say $A \subseteq \names$ is a
    \emph{support} for $x$ if whenever $\pi(a) = a$ for all $a \in A$,
    also $\pi \cdot x = x$.
  \item Let $X$ be a $\perma$-set and $x \in X$. We say $x$ is
    \emph{equivariant} if $\pi \cdot x = x$ for all $\pi \in \perma$,
    or equivalently if $\emptyset$ is a support for $x$.
  \item Let $X$ and $Y$ be $\perma$-sets. A function $f: X \rightarrow
    Y$ is \emph{equivariant} if it is a morphism in the category of
    $\perma$ sets, or equivalently if it is equivariant as an element
    of the exponential $Y^X$ in the category of $\perma$ sets, which
    is described explicitly as the set of functions $X$ to $Y$ with
    action given by conjugation.
  \item A \emph{nominal set} is a $\perma$-set $X$,
    such that for every $x \in X$, there exists a finite set $A
    \subseteq \names$ such that $A$ is a support for $x$.
  \end{enumerate}
\end{definition}

\begin{proposition}
  \label{prop:finsuppbinint}
  Let $X$ be a $\perma$-set and let $A$ and $B$ be finite supports of
  $x \in X$. Then $A \cap B$ is also a finite support of $x$. Hence
  also, whenever $A_1,\ldots, A_n$ are finite supports of $x \in X$ so
  is $A_1 \cap \ldots \cap A_n$.
\end{proposition}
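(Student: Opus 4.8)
The plan is to unfold the definition of support and reduce to the case of transpositions. By the earlier proposition on finite subsets, $A \cap B$ is again finite, so it remains only to verify that it is a support: given $\pi \in \perma$ with $\pi(a) = a$ for all $a \in A \cap B$, I must show $\pi \cdot x = x$. Since $\names$ has decidable equality and $A$, $B$ are finite, membership in $A$, $B$ and $A \cap B$ is decidable by the cited proposition, which is what licenses the case distinctions below.

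First I would reduce to transpositions. The support $\{a \in \names : \pi(a) \neq a\}$ of $\pi$ is a finite subset of $\names$ disjoint from $A \cap B$, and $\pi$ restricts to a permutation of this support. Using that $\perma$ is generated by transpositions, I can therefore write $\pi = \tau_1 \cdots \tau_k$ where each $\tau_i = (c_i\ d_i)$ is a transposition with $c_i, d_i \notin A \cap B$. Because the $\perma$-action is a group action, $\pi \cdot x = \tau_1 \cdot (\cdots (\tau_k \cdot x))$, so it suffices to prove $\tau \cdot x = x$ for a single transposition $\tau = (c\ d)$ with $c, d \notin A \cap B$.

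For such a $\tau$ I would decide the four membership bits $c \in A$, $d \in A$, $c \in B$, $d \in B$ and branch accordingly. If neither $c$ nor $d$ lies in $A$, then $\tau$ fixes $A$ pointwise and $\tau \cdot x = x$ since $A$ supports $x$; symmetrically if neither lies in $B$. Otherwise, since $c, d \notin A \cap B$, one of the two names, say $c$, lies in $A \setminus B$ and the other, $d$, lies in $B \setminus A$, with decidability telling us which is which. Since $A \cup B$ is finite and $\names$ is infinite, I can choose a fresh name $e \in \names$ with $e \notin A \cup B$. Then $(c\ d) = (d\ e)(c\ e)(d\ e)$, where $(d\ e)$ fixes $A$ pointwise (as $d, e \notin A$) and $(c\ e)$ fixes $B$ pointwise (as $c, e \notin B$); applying these three transpositions in turn and using that $A$ and $B$ each support $x$ yields $\tau \cdot x = x$.

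This establishes that $A \cap B$ is a finite support of $x$, and the statement for $A_1 \cap \cdots \cap A_n$ then follows by induction on $n$. The step requiring the most care is the reduction to transpositions fixing $A \cap B$ pointwise: I must check that a finite permutation whose support avoids the decidable finite set $A \cap B$ genuinely decomposes into transpositions each supported away from $A \cap B$, which is provable by induction on the size of the support but is precisely where decidable equality of $\names$ (and hence decidability of the relevant membership predicates) does the real work. Everything else is a routine and constructively valid case analysis.
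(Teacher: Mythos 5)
Your proof is correct and takes essentially the same route as the paper's: reduce to transpositions whose moved points avoid the decidable finite set $A \cap B$ (your induction on the support of $\pi$ is exactly the constructive decomposition the paper cites from Pitts), then do the decidable case analysis on membership in $A$ and $B$, handling the mixed case with a fresh name via the same conjugation identity $(c\;d) = (d\;e)(c\;e)(d\;e)$. The only differences are cosmetic (naming of the fresh element and how the case count is organised), so nothing further is needed.
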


\begin{proof}
  The proof of \cite[Proposition 2.3]{pittsnomsets} is already
  constructive as stated, but we include a proof here anyway to
  illustrate the proof techniques used for this kind of result.
  
  To show $A \cap B$ is a support for $x$, we only have to show that
  for each  finite permutation $\pi$ that fixes each element of $A \cap
  B$, we have $\pi \cdot x = x$.

  First note that by \cite[Theorem 1.15]{pittsnomsets} we can find
  $a_1,\ldots, a_n$ and $a_1', \ldots, a_n'$ such that $\pi(a_i) \neq
  a_i \neq a_i' \neq \pi(a_i')$ and $\pi$ decomposes as a product of
  transpositions
  \begin{equation}
    \label{eq:7}
    \pi = (a_1 \; a_1') \circ \ldots \circ (a_n \; a_n')
  \end{equation}
  (The argument in \cite{pittsnomsets} is an explicit argument by
  induction which is constructive as stated, using the decidability of
  equality for $\names$ and in particular decidability of finite
  subsets.)

  Since $\pi$ fixes each element of $A \cap B$ and $\pi(a_i) \neq a_i$
  we have $a_i \notin A \cap B$ and similarly $a_i' \notin A \cap B$.
  Now note if each transposition $(a_i \; a_i')$ fixes $x$, then so
  does $\pi$.  Hence the problem is reduced to showing that if $a, b
  \notin A \cap B$ then $(a \; b) \cdot x = x$. Since $A$ and $B$ are
  finite, they are decidable subsets of $\names$. Hence we can split
  into the 16 cases depending on whether or not $a$ and $b$ are
  elements of $A$ and $B$. By assumption $a, b \notin A \cap B$ so we
  can eliminate these 7 cases immediately. If $a, b \notin A$ then $(a
  \; b)$ fixes $A$ which is a support for $x$ so already $(a \; b)
  \cdot x = x$. Similarly for $a, b \notin B$. So we can eliminate
  another 7 cases. This only leaves the case $a \in A$ and $b \in B$
  and the case $b \in A$ and $a \in B$. Without loss of generality, we
  only have to check the case $a \in A$ and $b \in B$. Since $\names$
  is infinite there exists $c \notin A \cup B$. Then we decompose $(a
  \; b)$ as follows.
  \begin{equation}
    \label{eq:8}
    (a \; b) = (b \; c) \circ (a \; c) \circ (b \; c)
  \end{equation}
  Then $(b \; c)$ fixes $A$ and $(a \; c)$ fixes $B$. Hence each
  transposition in the composition fixes $x$ and so $(a \; b)$ does
  also.
\end{proof}

\begin{proposition}
  \label{prop:arbintsupp}
  In $\zf$ the following holds. Suppose that $X$ is a $\perma$-set and
  $(A_i)_{i \in I}$ is an indexed family of finite supports for $x \in
  X$ with $I$ inhabited. Then $\bigcap_{i \in I} A_i$ is also a finite
  support for $x$.
\end{proposition}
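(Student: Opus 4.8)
The plan is to reduce the arbitrary intersection to a finite one and then invoke Proposition~\ref{prop:finsuppbinint}. Since $I$ is inhabited, I would first fix some $i_0 \in I$ and write $B = \bigcap_{i \in I} A_i$. As $B \subseteq A_{i_0}$ and $A_{i_0}$ is finite, $B$ is finite (in $\zf$ every subset of a finite set is finite), so it only remains to show that $B$ is a support for $x$.

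The key observation is that although $I$ may be infinite, the family $(A_i)_{i \in I}$ contributes only finitely many distinct constraints once we intersect with $A_{i_0}$. Since $i_0 \in I$ we have
\begin{equation*}
  B = \bigcap_{i \in I} A_i = \bigcap_{i \in I} (A_{i_0} \cap A_i),
\end{equation*}
and each $A_{i_0} \cap A_i$ is a subset of the finite set $A_{i_0}$. I would then pass to the image $\mathcal{S} = \{\, A_{i_0} \cap A_i \mid i \in I \,\}$, which is a subset of $\powset(A_{i_0})$. Working in $\zf$, the set $\powset(A_{i_0})$ is finite and every subset of a finite set is finite, so $\mathcal{S}$ is finite, say $\mathcal{S} = \{ C_1, \ldots, C_k \}$. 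Choosing for each $C_j$ some index $i_j \in I$ with $A_{i_0} \cap A_{i_j} = C_j$ (only finitely many choices, so no appeal to the full axiom of choice is needed), we obtain
\begin{equation*}
  B = \bigcap_{i \in I} (A_{i_0} \cap A_i) = \bigcap_{j=1}^{k} C_j = A_{i_0} \cap A_{i_1} \cap \ldots \cap A_{i_k}.
\end{equation*}
Thus $B$ is exhibited as a finite intersection of the finite supports $A_{i_0}, A_{i_1}, \ldots, A_{i_k}$, and so by the ``hence also'' part of Proposition~\ref{prop:finsuppbinint} it is itself a finite support for $x$, which completes the proof.

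I expect the main obstacle to be not the algebra but pinpointing exactly where classical logic is indispensable, since the whole point of stating this result in $\zf$ (rather than $\czf$) is that it is expected to fail constructively. The delicate step is the claim that $\mathcal{S}$ is finite: deciding whether a given $C \in \powset(A_{i_0})$ belongs to $\mathcal{S}$ requires evaluating the existential $\exists i \in I.\, A_{i_0} \cap A_i = C$, which ranges over the possibly infinite set $I$ and so is not decidable in general. It is precisely excluded middle that lets us treat $\mathcal{S} \subseteq \powset(A_{i_0})$ as a decidable, hence finite, subset and thereby collapse the arbitrary intersection to a finite one; the finite-choice and finite-intersection ingredients are already available constructively, so they are not where the argument breaks down.
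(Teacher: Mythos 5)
Your proof is correct and follows essentially the same route as the paper's: fix $i_0 \in I$, rewrite $\bigcap_{i \in I} A_i$ as $\bigcap_{i \in I}(A_{i_0} \cap A_i)$, observe that the resulting family is a subset of $\powset(A_{i_0})$ and hence (classically) a finite collection of finite supports, and conclude via Proposition~\ref{prop:finsuppbinint}. Your closing diagnosis of where excluded middle enters --- in establishing finiteness of the image family, since membership involves an undecidable existential over $I$ --- is also exactly the constructive failure point the paper is exploiting.
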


\begin{proof}
  Since $I$ is inhabited, there exists some $i_0 \in I$. Note that we
  have
  \begin{align}
    \bigcap_{i \in I} A_i &= \bigcap_{i \in I} (A_{i_0} \cap A_i) \\
    &= \bigcap \{ B \in \powset(A_{i_0}) \;|\; (\exists i \in I)\,B =
    (A_i \cap A_{i_0}) \}
  \end{align}

  In classical logic the class of finite sets is closed under power
  sets and subsets, so the set $\{ B \in \powset(A_{i_0}) \;|\;
  (\exists i \in I)\,B = A_i \}$ is a finite collection of finite
  sets. The result now follows from proposition
  \ref{prop:finsuppbinint}.
\end{proof}

\begin{proposition}
  In $\zf$, for any nominal set $X$, and any $x \in X$ there exists a
  least finite support of $x$, which is defined as below.
  \begin{equation}
    \label{eq:6}
    \operatorname{Supp}(x) := \bigcap \{ A \subseteq \names \;|\; A \text{ is
      a finite support for } x\}
  \end{equation}
\end{proposition}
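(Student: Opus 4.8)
The plan is to verify the two defining properties of a least element: that $\operatorname{Supp}(x)$ is itself a finite support for $x$, and that it is contained in every finite support of $x$. The second property is immediate, since $\operatorname{Supp}(x)$ is by definition an intersection of sets and so is contained in each member of the collection being intersected; in particular $\operatorname{Supp}(x) \subseteq A$ for every finite support $A$ of $x$. The real content is therefore to show that this intersection is again a finite support.

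First I would observe that the collection
\[
  \mathcal{S} := \{ A \subseteq \names \;|\; A \text{ is a finite support for } x \}
\]
is a set, obtained by bounded separation as a subset of $\powset(\names)$, and that it is inhabited: since $X$ is a nominal set there exists at least one finite support $A_0 \subseteq \names$ for $x$, so $A_0 \in \mathcal{S}$. This inhabitedness is exactly the hypothesis needed to invoke the preceding result.

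Next I would regard $\mathcal{S}$ as an indexed family of finite supports, indexing it by itself (taking $I = \mathcal{S}$ and $A_i = i$), and apply Proposition \ref{prop:arbintsupp}. Since $I$ is inhabited, that proposition tells us that $\bigcap_{i \in I} A_i = \bigcap \mathcal{S} = \operatorname{Supp}(x)$ is a finite support for $x$. Combined with the containment noted above, this shows that $\operatorname{Supp}(x)$ is the least finite support for $x$, establishing existence.

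I do not expect any genuine obstacle, since the essential work has already been carried out in Proposition \ref{prop:arbintsupp} and Proposition \ref{prop:finsuppbinint}. The only point requiring care is the repeated appeal to classical logic: finiteness of $\operatorname{Supp}(x)$ relies on the fact that in $\zf$ a subset of a finite set is finite (applied to $\operatorname{Supp}(x) \subseteq A_0$), and the reduction inside Proposition \ref{prop:arbintsupp} similarly uses that the power set of a finite set is finite. Since it is precisely these classical facts whose constructive failure the rest of the paper exploits, it is worth emphasising that this proposition is a genuinely $\zf$ result and is not expected to survive in $\czf$.
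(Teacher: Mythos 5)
Your proof is correct and matches the paper's own argument: both establish that the collection of finite supports is inhabited (since $X$ is nominal), invoke Proposition \ref{prop:arbintsupp} to conclude that the intersection $\operatorname{Supp}(x)$ is itself a finite support, and observe that minimality is immediate from the definition of intersection. Your closing remarks on where classical logic enters (subsets and power sets of finite sets being finite in $\zf$) are accurate and simply make explicit what the paper leaves implicit.
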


\begin{proof}
  Since $X$ is a nominal set, $x$ must have at least one finite
  support. Now applying proposition \ref{prop:arbintsupp} the
  definition given is a finite support of $x$. However, it is a subset
  of any finite support of $x$ by definition.
\end{proof}

The concept of least finite support is used throughout standard
presentations of nominal sets such as \cite{pittsnomsets}. However, we
will see that the theorems of $\zf$ above fail quite badly in a
constructive setting, so least finite support should not be used
constructively.

Much of the basic theory of nominal sets has been proved
constructively (and avoiding least finite support) by Choudhury in
\cite{choudhuryconnom}, with proofs verified electronically in the
Agda proof assistant.

\section{The Counterexamples}
We aim towards a theorem providing our first example of a nominal set
where least finite support does not provably exist. We use a concrete
definition that only requires the natural number object $\mathbb{N}$,
the terminal object, the nominal set of names $\names$, binary
coproducts and a single instance of exponentiation.

\begin{definition}
  Let $X$ be a nominal set. We say a \emph{support function} is a
  function $S \colon X \rightarrow \powset_\mathrm{fin} \names$, such
  that for every $x \in X$, $S(x)$ is a finite support of $x$.
\end{definition}

\begin{proposition}
  Let $X$ be a nominal set where for every $x \in X$, a least finite
  support of $x$ exists. Then $X$ admits a support function.
\end{proposition}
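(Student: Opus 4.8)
The plan is to send each element of $X$ to its least finite support and then verify that this assignment really is a function in the sense of constructive set theory, i.e.\ a single-valued and total subset of $X \times \powset_\mathrm{fin}\names$. The point worth stressing at the outset is that, although turning a statement of the form ``for every $x$ there exists $A$ with $P(x,A)$'' into a function normally requires a choice principle, here the witness $A$ is \emph{unique}: if $A$ and $B$ are both least finite supports of $x$ then $A \subseteq B$ and $B \subseteq A$, so $A = B$ by extensionality. Thus no choice is needed and we may carve out the function directly by separation.

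Concretely, I would form
\[
  S := \{ (x,A) \in X \times \powset_\mathrm{fin}\names \mid A \text{ is a support of } x \text{ and } (\forall B \in \powset_\mathrm{fin}\names)\,(B \text{ is a support of } x \rightarrow A \subseteq B) \}
\]
and then check that the defining formula is bounded. Unfolding ``$A$ is a support of $x$'' gives the clause $(\forall \pi \in \perma)$ asserting that if $\pi$ fixes each element of $A$ then $\pi \cdot x = x$, and the minimality condition is already written with its quantifier bounded by the set $\powset_\mathrm{fin}\names$. Since every quantifier ranges over a set, bounded separation applies and $S$ is a set. Finally I would verify that $S$ is a function $X \to \powset_\mathrm{fin}\names$: single-valuedness is exactly the uniqueness observation above, while totality is exactly the hypothesis that each $x$ has a least finite support. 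By construction each value $S(x)$ is a finite support of $x$, so $S$ is a support function.

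The main thing to be careful about is not any deep argument but the bookkeeping needed to confirm that the predicate defining $S$ is bounded, so that only bounded separation --- which $\czf$ provides --- is invoked, together with the fact that $\powset_\mathrm{fin}\names$ is itself a set. Conceptually the essential ingredient is the uniqueness of the least finite support, which is precisely what lets the construction go through without excluded middle or any form of choice; this is what makes the proposition a genuinely constructive statement, in contrast to the existence of least finite support itself.
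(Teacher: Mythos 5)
Your proposal is correct and is essentially the paper's own argument: the paper's proof is the single observation that least finite support is unique when it exists, hence yields a well-defined function. You have simply spelled out the bookkeeping (the graph via bounded separation, single-valuedness from uniqueness by extensionality, totality from the hypothesis), all of which is sound in $\czf$.
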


\begin{proof}
  Least finite support is unique if it exists, so it gives us a well
  defined function.
\end{proof}

Recall that exponentials in nominal sets are implemented as
follows. We first define the exponential $Y^X$ in $\perma$-sets to be
the exponential in $\set$ (i.e. the set of functions from $X$ to $Y$)
together with action given by conjugation (i.e. $\pi \cdot f (x) :=
\pi \cdot (f(\pi^{-1} \cdot x))$). We then take the nominal set
exponential to be the subobject of $Y^X$ consisting of those functions
$f$ for which a finite support exists. One can show that this is still
an exponential constructively either directly, or by adapting the
proof of \cite[Theorem 2.19]{pittsnomsets}.

We also recall the following basic facts about nominal sets.
\begin{enumerate}
\item The natural number object in nominal sets is just $\nat$ with
  the trivial action ($\pi \cdot n = n$ for all $n$).
\item The terminal object $1$ in nominal sets is any singleton set
  with the trivial action.
\item The set of names, $\names$, can be viewed as a nominal set in a
  canonical way by taking $\pi \cdot a$ to be $\pi(a)$.
\item Coproducts are implemented as disjoint union, as in $\set$, with
  action defined componentwise.
\end{enumerate}

\begin{theorem}
  \label{thm:nocc}
  Suppose that the following nominal set admits a support function.
  \begin{equation}
    (\names + 1)^\nat
  \end{equation}
  Then $\wlpo$ follows. (And hence one cannot show constructively that
  this nominal set admits a support function, or that every element
  has least finite support).
\end{theorem}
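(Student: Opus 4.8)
The plan is to assume a support function $S$ for $(\names+1)^\nat$ and use it to decide, for an arbitrary binary sequence $\alpha\colon\nat\to 2$, whether $(\forall n)\,\alpha(n)=0$. First I would record how the action works: since $\nat$ carries the trivial action, conjugation acts pointwise, so $(\pi\cdot f)(n)=\pi\cdot f(n)$, where $\pi$ fixes the point $\star$ of $1$ and sends a name $c$ to $\pi(c)$. The crucial preliminary observation is that if $A$ is any support of such an $f$ and $f(n)=c\in\names$ for some $n$, then $c\in A$: otherwise, using that $\names$ is infinite, pick $d\notin A\cup\{c\}$ and note the transposition $(c\ d)$ fixes $A$ pointwise yet moves $f$ at position $n$, contradicting that $A$ is a support. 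In other words, every support contains all the names occurring in the range of $f$.

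The key trick is to neutralise the fact that $S$ may return wastefully large supports. To this end I would first evaluate $S$ on the constant function $\star^\nat$ (which is equivariant), obtaining a fixed finite set $A_0 := S(\star^\nat)$, and then---since $\names$ is infinite and $A_0$ finite---fix a name $a\notin A_0$. Given $\alpha$, define $f_\alpha\colon\nat\to\names+1$ by $f_\alpha(n)=a$ when $\alpha(n)=1$ and $f_\alpha(n)=\star$ when $\alpha(n)=0$. A direct check shows $\{a\}$ is a support of $f_\alpha$, so $f_\alpha$ genuinely lies in the nominal set and $S(f_\alpha)$ is defined. Because $S(f_\alpha)$ is a finite subset of $\names$, which has decidable equality, membership of $a$ is decidable, so we may split on whether $a\in S(f_\alpha)$.

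In the case $a\notin S(f_\alpha)$, the range observation forces that no $n$ has $f_\alpha(n)=a$, hence $\alpha(n)\neq 1$ and so $\alpha(n)=0$ for every $n$; thus $(\forall n)\,\alpha(n)=0$ holds. In the case $a\in S(f_\alpha)$, I would argue that $(\forall n)\,\alpha(n)=0$ fails: if it held, then $f_\alpha=\star^\nat$, whence $S(f_\alpha)=A_0$, which does not contain $a$, a contradiction. Either branch decides the truth of $(\forall n)\,\alpha(n)=0$, and since $\alpha$ was arbitrary this yields $\wlpo$. I expect the only real subtlety---the step that makes the whole argument go through---to be the fresh-name trick of choosing $a\notin S(\star^\nat)$; with a fixed name the branch $a\in S(f_\alpha)$ would be inconclusive, since a support function is free to add irrelevant names. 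The remaining steps are routine verifications that rely on decidability of equality on $\names$ and on $\names$ being infinite.
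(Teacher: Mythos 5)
Your proposal is correct and is essentially the paper's own argument: evaluate $S$ at the constant function $\underline{\ast}$, pick a fresh name $a \notin S(\underline{\ast})$, encode $\alpha$ as a function sending $n$ to $a$ or $\ast$ according to $\alpha(n)$, and decide $a \in S(\alpha_a)$ using decidability of membership in finite subsets of $\names$, with the two branches yielding $(\forall n \in \nat)\,\alpha(n)=0$ and its negation respectively. The only local variation is in the branch $a \notin S(\alpha_a)$, where you conclude directly from the (constructively valid) observation that a finite support must contain every name in the range of the function, whereas the paper notes that $\emptyset = S(\alpha_a) \cap \{a\}$ is a support by Proposition~\ref{prop:finsuppbinint} and then invokes the characterization of when $\alpha_a$ is equivariant; both routes are sound and of comparable length.
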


\begin{proof}
  Assume that $S$ is a support function for $(\names + 1)^\nat$.

  We write $\ast$ for the unique element of $1$.

  Let $\underline{\ast} \colon \nat \rightarrow \names + 1$
  be the function constantly equal to $\ast$. Let
  $a \in \names \setminus S(\underline{\ast})$.

  Now for any $\alpha \colon \nat \rightarrow 2$, we consider
  $\alpha_a \colon \nat \rightarrow \names + 1$ as below.
  \begin{equation}
    \label{eq:10}
    \alpha_a(n) :=
    \begin{cases}
      \ast & \alpha(n) = 0 \\
      a & \alpha(n) = 1
    \end{cases}
  \end{equation}

  First note that $\alpha_a$ is equivariant if and only if we have
  $(\forall n \in \nat)\,\alpha(n)=0$. Certainly if $\alpha(n) = 0$
  for every $n$ then $\alpha_a$ is equivariant. For the converse, let
  $a' \in \names \setminus \{a\}$ and note that
  $(a \; a')\cdot \alpha_a = \alpha_a$, by equivariance. Then for
  every $n \in \nat$, we have
  $(a \; a') \cdot \alpha_a(n) = \alpha_a(n)$. Now for every $n$, we
  have $\alpha(n) = 0$ or $\alpha(n) = 1$, because $\nat$ has
  decidable equality, but if $\alpha(n) = 1$, then we would get
  \begin{align}
    \alpha_a(n) &= a \\
    \alpha_a(n) &= (a \; a') \cdot \alpha_a(n) \\
                &= (a \; a') \cdot a \\
                &= a' \neq a
  \end{align}
  Hence we derive $\alpha(n) = 0$ for all $n$, as required.

  Since $S(\alpha_a)$ is finite we have that either
  $a \in S(\alpha_a)$ or $a \notin S(\alpha_a)$. We split into the two
  cases. If $a \in S(\alpha_a)$, then
  $S(\alpha_a) \neq S(\underline{\ast})$. Since $S$ is a function, this
  gives $\alpha_a \neq \underline{\ast}$. But then we must have
  $\neg (\forall n \in \nat)\,\alpha(n) = 0$. On the other hand, if
  $a \notin S(\alpha_a)$, then $\emptyset$ is a support for $\alpha_a$
  (since it is the intersection of $S(\alpha_a)$ and $\{a\}$). Hence
  we have $(\forall n \in \nat)\,\alpha(n) = 0$. So applying this for
  any $\alpha$, we get $\wlpo$.  
\end{proof}

In the next lemma we give another example which is less concrete, but
in some ways more useful. A key point is that in the previous example
there was an element that was equivariant if and only if the statement
$(\forall n \in \nat)\,\alpha(n) = 0$ holds, whereas in the next
lemma we will see that something similar can be done more generally
for any bounded formula.

\begin{lemma}
  \label{lem:nointer}
  Let $\phi$ be a formula with only bounded quantifiers. (The reason
  for this restriction is that $\czf$ has separation only for bounded
  formulas; over $\izf$ $\phi$ can by any formula). There is a nominal
  set $X$ and an element $\bar{a} \in X$ such that $\bar{a}$ is
  equivariant iff $\phi$ holds.

  Also there is a set $\mathcal{S}$ such that
  \begin{enumerate}
  \item $\mathcal{S}$ is inhabited
  \item Each $S \in \mathcal{S}$ is a finite support of $\bar{a}$
  \item If $\bigcap \mathcal{S}$ is a support for $\bar{a}$, then
    $\neg \neg \phi \rightarrow \phi$.
  \end{enumerate}

  Furthermore, if $\phi$ is of the form $\psi \vee \neg \psi$ where
  $\neg \neg \psi \rightarrow \psi$, then there are subfinite supports
  $S_1$ and $S_2$ of $\bar{a}$ such that $S_1 \cap S_2 = \emptyset$.
  (Note also that $\neg \neg (\psi \, \vee \, \neg \psi)$ holds for
  any formula $\psi$ in intuitionistic logic, so if
  $\bigcap \mathcal{S}$ is a support for $\bar{a}$, we derive in this
  case $\psi \vee \neg \psi$.)
\end{lemma}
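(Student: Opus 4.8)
The plan is to realise $\phi$ as the equivariance of a single finitely supported subset of $\names$, chosen so that $\phi$ appears \emph{positively}. Fix a name $a\in\names$ and set
\[ \bar{a} := \{x\in\names \mid \phi\vee x=a\}, \]
taking $X$ to be the orbit $\{\pi\cdot\bar{a}\mid\pi\in\perma\}$ under the elementwise action (a set since $\perma$ is, and a nominal set since each $\pi\cdot\bar{a}$ is supported by $\{\pi(a)\}$). The point of the disjunct $\phi\vee x=a$, rather than the naive $\{x\in\{a\}\mid\phi\}$, is that when $\phi$ holds the set becomes all of $\names$ and is hence genuinely equivariant, so we obtain equivariance iff $\phi$ on the nose rather than iff $\neg\neg\phi$. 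Concretely I would first compute $\pi\cdot\bar{a}=\{x\mid\phi\vee x=\pi(a)\}$ and then verify the key identity
\[ \pi\cdot\bar{a}=\bar{a}\;\Longleftrightarrow\;\phi\vee\pi(a)=a. \]
The forward direction uses that $\pi(a)$ lies in $\pi\cdot\bar{a}=\bar{a}$; the backward direction is a short case analysis on the disjunction. Taking $\pi$ to be a transposition $(a\;a')$ with $a'\neq a$ then gives equivariance $\Rightarrow\phi$ (since $\pi(a)=a'\neq a$ forces the left disjunct), while $\phi\Rightarrow\bar{a}=\names\Rightarrow$ equivariant is immediate.

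From the identity I would read off the support characterisation: a subset $A\subseteq\names$ is a support of $\bar{a}$ iff every $\pi$ fixing $A$ pointwise satisfies $\phi\vee\pi(a)=a$. For finite $A$, using decidability of $a\in A$ and infinitude of $\names$, this reduces to the bounded condition $a\in A\vee\phi$. Hence I can define $\mathcal{S}:=\{A\in\powset_{\mathrm{fin}}\names\mid a\in A\vee\phi\}$ by bounded separation; it is inhabited by $\{a\}$ and consists exactly of the finite supports of $\bar{a}$. For the third clause, note $\{a\}\in\mathcal{S}$ gives $\bigcap\mathcal{S}\subseteq\{a\}$, and if $\phi$ held then $\emptyset\in\mathcal{S}$, so $a\notin\bigcap\mathcal{S}$; contrapositively $a\in\bigcap\mathcal{S}\to\neg\phi$. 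Assuming $\bigcap\mathcal{S}$ is a support together with $\neg\neg\phi$, this forces $a\notin\bigcap\mathcal{S}$, whence $\bigcap\mathcal{S}=\emptyset$ is a support, i.e. $\bar{a}$ is equivariant, i.e. $\phi$; this is exactly $\neg\neg\phi\to\phi$.

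For the furthermore, with $\phi=\psi\vee\neg\psi$ and $\neg\neg\psi\to\psi$, I would take the two subfinite sets
\[ S_1:=\{x\in\{a\}\mid\neg\psi\}, \qquad S_2:=\{x\in\{a\}\mid\psi\}, \]
both subsets of the finite set $\{a\}$ and with $S_1\cap S_2=\{x\in\{a\}\mid\psi\wedge\neg\psi\}=\emptyset$. To see each is a support I check the condition $\phi\vee\pi(a)=a$ for $\pi$ fixing it, deciding $\pi(a)=a$ by decidability of equality on $\names$: if $\pi(a)=a$ the right disjunct holds, and otherwise the fixing hypothesis yields $\neg\psi$ for $S_2$ (giving $\phi$ directly) and $\neg\neg\psi$ for $S_1$ (giving $\psi$, hence $\phi$, via the assumption $\neg\neg\psi\to\psi$). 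The closing remark is then immediate: $\neg\neg(\psi\vee\neg\psi)$ is an intuitionistic tautology, so $\neg\neg\phi$ holds and the third clause yields $\phi=\psi\vee\neg\psi$.

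I expect the main obstacle to be exactly the positivity issue flagged above: naive encodings (a subsingleton $\{x\in\{a\}\mid\phi\}$, or a subterminal-indexed constant function into $\names$) only make $\bar{a}$ equivariant iff $\neg\phi$, and replacing $\phi$ by $\neg\phi$ there yields iff $\neg\neg\phi$, never iff $\phi$; the device $\phi\vee x=a$ is what breaks this, since equivariance then coincides with the set having become all of $\names$. The remaining care is bookkeeping: ensuring $\mathcal{S}$ is genuinely a set (via the bounded characterisation and the finite powerset) and that every case split is over a decidable proposition, namely equality of names.
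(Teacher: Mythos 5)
Your proof is correct and follows essentially the same route as the paper: a one-orbit nominal set of name-indexed subsets of $\names$ that equal $\names$ exactly when $\phi$ holds, equivariance tested by a single transposition $(a\;a')$, a set $\mathcal{S}$ carved out by bounded separation with $\emptyset \in \mathcal{S}$ exactly under $\phi$, a double-negation-stability step to get $\bigcap\mathcal{S}=\emptyset$, and the identical $S_1, S_2 \subseteq \{a\}$ construction (labels swapped) for the furthermore clause. The only deviations are cosmetic and harmless: your $\bar{a}=\{x\in\names\mid\phi\vee x=a\}$ is the dual of the paper's $\bar{a}=\{x\in\{a\}\mid\phi\}\cup(\names\setminus\{a\})$ (you include $a$ unconditionally and the other names iff $\phi$, the paper the reverse), and you take $\mathcal{S}$ to be the set of \emph{all} finite supports, characterized by the bounded condition $a\in A\vee\phi$, where the paper uses the smaller witness $\{x\in\{\emptyset\}\mid\phi\}\cup\{\{a\}\}$.
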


\begin{proof}
  Fix a bounded formula, $\phi$. For any $a \in \names$, define
  \begin{equation}
    \label{eq:2}
    \bar{a} := \{ x \in \{a\} \;|\; \phi \}
    \cup (\names \setminus \{a\})
  \end{equation}
  We then define $X$ to be the set
  \begin{equation}
    \label{eq:3}
    X := \{ \bar{a} \;|\; a \in \names \}
  \end{equation}
  We define the action of $\pi \in \perm(\names)$ by
  \begin{equation}
    \label{eq:4}
    \pi \cdot \bar{a} := \{ \pi(x) \;|\; x \in \bar{a} \} =
    \overline{\pi(a)}
  \end{equation}

  Suppose that $\phi$ holds. Then $\bar{a} = \names$ and hence for all
  $\pi \in \perma$ we have $\pi \cdot \bar{a} = \bar{a}$. Therefore
  $\bar{a}$ is equivariant.

  Conversely, suppose that $\bar{a}$ is equivariant. Then for any
  $a' \neq a$ we have $(a \; a') \cdot \bar{a} = \bar{a}$. Certainly
  $a' \in \bar{a}$, so we deduce $a \in \bar{a}$, and so $\phi$ must
  hold.

  We have now shown that $\bar{a}$ is equivariant iff $\phi$ holds.
  
  Now define $\mathcal{S}$ as follows:
  \begin{equation}
    \label{eq:5}
    \mathcal{S} := \{ x \in \{\emptyset \} \;|\; \phi \}
    \cup \{\{a\}\}
  \end{equation}

  $\mathcal{S}$ is inhabited since $\{a\} \in \mathcal{S}$.

  For any $S \in \mathcal{S}$ we have that either $S = \emptyset$ and
  $\phi$ holds, or $S = \{a\}$. In both cases $S$ is a finite support
  of $\bar{a}$.

  Now assume that $\bigcap \mathcal{S}$ is a support for
  $\bar{a}$. Since $\phi \; \rightarrow \emptyset \in \mathcal{S}$ we
  have $\phi \; \rightarrow \; \bigcap \mathcal{S} = \emptyset$, and
  so
  $\neg \neg \phi \; \rightarrow \; \neg \neg \, \bigcap \mathcal{S} =
  \emptyset$.
  However, from $\neg \neg \, \bigcap \mathcal{S} = \emptyset$ we can
  derive $\bigcap \mathcal{S} = \emptyset$: in general the statement
  that a set $X$ is empty is stable under double negation which is
  easy to show noting that $X$ is empty precisely when
  $(\forall x \in X)\,\bot$. But then $\bigcap \mathcal{S}$ being a
  support for $\bar{a}$ says precisely that $\emptyset$ is a support
  for $\bar{a}$, which we have already shown implies $\phi$. Putting
  this together gives us that if $\bigcap \mathcal{S}$ is a support
  for $\bar{a}$ then $\neg \neg \phi \rightarrow \phi$, as required.

  Now suppose that $\phi$ is of the form $\psi \vee \neg \psi$ where
  $\neg \neg \psi \rightarrow \psi$ and define $S_1$ and $S_2$ as
  follows:
  \begin{align}
    S_1 &:= \{ a \in \{a\} \;|\; \psi \} \\
    S_2 &:= \{ a \in \{a\} \;|\; \neg \psi \}
  \end{align}
  Suppose that $\pi$ is a finite permutation such that $\pi(x) = x$
  whenever $x \in S_1$. Note that this precisely says that if $\psi$
  holds then $\pi(a) = a$. Since equality in $\names$ is decidable, we
  have $\pi(a) = a$ or $\pi(a) \neq a$. In the former case, we easily
  have that $\pi \cdot \bar{a} = \bar{a}$. In the latter case, we
  deduce $a \notin S_1$ and so $\neg \psi$ and thereby $\phi$ which is
  equal to $\psi \vee \neg \psi$. Since this implies $\bar{a}$ is
  equivariant, we also have in this case that
  $\pi \cdot \bar{a} = \bar{a}$. Hence $S_1$ is a support for
  $\bar{a}$. For $S_2$ we start the same as before. However, if $\pi$
  fixes the elements of $S_2$ and $\pi(a) \neq a$ we only derive
  $\neg \neg \psi$. At this point we use the assumption that
  $\neg \neg \psi \rightarrow \psi$ and then continue the same as
  before.

  Finally, we easily have $S_1 \cap S_2 = \emptyset$ since $\psi
  \wedge \neg \psi$ is false.
\end{proof}

We now apply the above examples to get a number of independence
results.

\begin{theorem}
  \label{thm:main}
  Suppose that any one of the statements below holds in general for
  all nominal sets $X$ and $Y$ and all elements $x$ of $X$. Then $\wlpo$
  follows. Hence none of these statements are constructively provable
  in general.
  \begin{enumerate}
  \item If $X$ and $Y$ admit support functions then so does
    $Y^X$. \label{suppfexp}
  \item If every element of $X$ and every element of $Y$ has a least
    finite support, then so does every element of $Y^X$. \label{lsuppexp}
  \item The intersection of all finite supports of $x$ is
    finite. \label{intfin}
  \item The intersection of all finite supports of $x$ is a support of
    $x$. \label{intsupp}
  \item $x$ has a least finite support. \label{leastfinsupp}
  \item $x$ has a least subfinite support. \label{leastsubsupp}
  \item The binary intersection of two subfinite supports of $x$ is a
    support of $x$. \label{bintsubfin}
  \item For every subfinite support $S$ of $x$, there is a finite
    support $S'$ of $x$ such that $S' \subseteq
    S$. \label{finsubsubfin}
  \end{enumerate}
\end{theorem}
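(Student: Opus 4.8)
The plan is to derive $\wlpo$ from each of the eight statements by feeding the two counterexamples already constructed into them, choosing in each case the right instance. Statements \ref{suppfexp} and \ref{lsuppexp} will be reduced to Theorem \ref{thm:nocc}, while statements \ref{intfin}--\ref{finsubsubfin} will be reduced to Lemma \ref{lem:nointer}. Throughout, fix an arbitrary binary sequence $\alpha\colon\nat\to 2$ and set $\psi := (\forall n\in\nat)\,\alpha(n)=0$; since each $\alpha(n)=0$ is decidable, $\psi$ is stable, i.e.\ $\neg\neg\psi\to\psi$. Deriving $\psi\vee\neg\psi$ for arbitrary $\alpha$ is exactly $\wlpo$.

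For \ref{suppfexp} and \ref{lsuppexp} I would first check that $\nat$ (trivial action) and $\names+1$ provably admit support functions and provably have least finite supports for every element: every element of $\nat$ and the point of $1$ are equivariant, while for $a\in\names$ the set $\{a\}$ is a support and, using that $\names$ is infinite with decidable equality, every finite support of $a$ must contain $a$, so $\{a\}$ is least. Feeding these into \ref{suppfexp} gives a support function on $(\names+1)^\nat$ directly; for \ref{lsuppexp} the hypothesis yields least finite supports on $(\names+1)^\nat$, and then the earlier proposition that least finite supports induce a support function applies. In both cases Theorem \ref{thm:nocc} delivers $\wlpo$.

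For the remaining statements I would apply Lemma \ref{lem:nointer} with two different choices of the bounded formula $\phi$. For \ref{intfin} take $\phi:=\psi$ and examine the intersection of all finite supports of $\bar a$: since $\{a\}$ is a finite support, this intersection is a subset of $\{a\}$, and $a$ lies in it precisely when no finite support omits $a$, which by Proposition \ref{prop:finsuppbinint} happens exactly when $\neg\psi$ holds. Statement \ref{intfin} makes this subset finite, hence a decidable subset of $\{a\}$, giving $\neg\psi\vee\neg\neg\psi$; stability of $\psi$ then yields $\psi\vee\neg\psi$. For \ref{intsupp} and \ref{leastfinsupp} take instead $\phi:=\psi\vee\neg\psi$; the key auxiliary observation is that supports are upward closed, so since the intersection of all finite supports is contained in $\bigcap\mathcal S$, the hypothesis that the former is a support (respectively, that a least finite support exists and so equals it) forces $\bigcap\mathcal S$ to be a support, whereupon property~3 of the lemma together with the tautology $\neg\neg(\psi\vee\neg\psi)$ yields $\phi$, i.e.\ $\wlpo$.

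For \ref{leastsubsupp}--\ref{finsubsubfin} I would again take $\phi:=\psi\vee\neg\psi$ and invoke the ``Furthermore'' clause of Lemma \ref{lem:nointer}, which provides subfinite supports $S_1,S_2$ of $\bar a$ with $S_1\cap S_2=\emptyset$. A least subfinite support must sit inside both, hence inside $\emptyset$, so $\emptyset$ is a support (statement \ref{leastsubsupp}); the binary intersection $S_1\cap S_2=\emptyset$ is a support (statement \ref{bintsubfin}); and for \ref{finsubsubfin} the finite supports $S_1'\subseteq S_1$ and $S_2'\subseteq S_2$ satisfy $S_1'\cap S_2'=\emptyset$, which is a support by Proposition \ref{prop:finsuppbinint}. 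In each case $\emptyset$ being a support makes $\bar a$ equivariant, hence $\phi=\psi\vee\neg\psi$ holds, giving $\wlpo$. The main obstacle I anticipate is bookkeeping rather than depth: one must select the stable formula $\psi$ for the finiteness statement but the excluded-middle instance $\psi\vee\neg\psi$ for the support statements, and carefully justify the bridging facts --- upward closure of supports, the identification of the intersection of all finite supports, and the reuse of Proposition \ref{prop:finsuppbinint} --- so that each hypothesis is applied to an object to which it genuinely pertains.
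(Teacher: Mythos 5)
Your proposal is correct and follows essentially the same route as the paper: Theorem \ref{thm:nocc} for parts \ref{suppfexp} and \ref{lsuppexp}, and Lemma \ref{lem:nointer} (with the stable formula $\psi$ for finiteness, and the instance $\psi \vee \neg \psi$ for the support statements) for the rest, relying on the same bridging facts --- upward closure of supports, decidability of finite subsets of $\names$, and Proposition \ref{prop:finsuppbinint}. The only cosmetic differences are that you run part \ref{intfin} through the lemma's $\bar{a}$ with $\phi := \psi$ where the paper uses $\alpha_a$ in $(\names + 1)^\nat$ directly (the identification of the intersection of all finite supports with $\{ x \in \{a\} \;|\; \neg \psi \}$ is the same computation), and that you argue part \ref{leastsubsupp} directly from $S_1 \cap S_2 = \emptyset$ rather than reducing it to parts \ref{intsupp} or \ref{bintsubfin}.
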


\begin{proof}
  For \ref{suppfexp} and \ref{lsuppexp}, we apply theorem
  \ref{thm:nocc} directly, noting that every element of $\names + 1$
  and every element of $\nat$ has least finite support.
  
  For \ref{intfin}, we take $X := (\names + 1)^\nat$ and again apply
  theorem \ref{thm:nocc}. Let $\alpha_a$ be as in the proof of theorem
  \ref{thm:nocc}.

  We first check that the intersection of all finite supports of
  $\alpha_a$ is equal to the following set.
  \begin{equation}
    \label{eq:11}
    L := \{x \in \{a\} \;|\; \neg\,(\forall n \in
    \nat)\,\alpha(n) = 0 \}
  \end{equation}
  To show the two sets are equal, we need to show that they have the
  same elements. Suppose first that $x \in L$. First note that we must
  have $x = a$ and that $\neg \, (\forall n \in \nat)\,\alpha(n) = 0$
  is true. We want to show that $a$ lies in every finite support of
  $\alpha_a$. Let $S$ be a finite support of $\alpha_a$. We want to
  show $a \in S$, but since $S$ is a finite, and so decidable subset
  of $\names$, it suffices to show $\neg \neg a \in S$. Now if we had
  $a \notin S$, then we would have $S \cap \{a\} = \emptyset$. But
  then $\emptyset$, as the intersection of two finite supports would
  itself be a finite support, making $\alpha_a$ equivariant and so we
  would get $(\forall n \in \nat)\, \alpha(n) = 0$, contradicting
  $\neg (\forall n \in \nat)\,\alpha(n) = 0$. We deduce
  $\neg \neg a \in S$ and so $a \in S$, as required.

  Now conversely, assume that $x$ belongs to every finite support of
  $\alpha_a$. Since $\{a\}$ is a finite support we must have $x =
  a$. Furthermore, since the intersection of all finite supports is
  inhabited, $\emptyset$ cannot be a finite support. Hence $\alpha_a$
  is not equivariant, and so we derive $\neg (\forall n \in
  \nat)\,\alpha_a(n) = 0$. But then we have $x \in L$.

  We have now verified that $L$ is equal to the intersection of all
  finite supports.
  
  If $L$ was finite, it would also be decidable as a subset of
  $\names$. But then we could decide whether or not $a \in L$ and then
  recalling that the statement $(\forall n \in \nat)\,\alpha(n)=0$ is
  stable under double negation, derive
  $(\forall n \in \nat)\,\alpha(n)=0 \;\vee\; \neg (\forall n \in
  \nat)\,\alpha(n)=0$.
  If \ref{intfin} held in general we could show this for any $\alpha$
  and so derive $\wlpo$.
  
  For \ref{intsupp} to \ref{finsubsubfin} we use lemma
  \ref{lem:nointer} as follows.

  For \ref{intsupp}, let $\phi$ be any bounded formula. Note that the
  intersection of all least finite supports of $\bar{a}$ has to be a
  subset of $\bigcap \mathcal{S}$ and so if it is a support then so is
  $\bigcap \mathcal{S}$. Hence if the intersection of all finite
  supports is a support for all elements of all nominal sets we derive
  $\neg \neg \phi \rightarrow \phi$ for any bounded formula $\phi$
  (and therefore also $\phi \vee \neg \phi$ for any bounded
  $\phi$). In particular this gives $\wlpo$, but it is of course much
  stronger.

  For \ref{leastfinsupp}, we can use either theorem \ref{thm:nocc} or
  lemma \ref{lem:nointer}. There is also another example in
  \cite[Section 6.1]{swannomawfs} based on the nerve of a metric
  space. Note that we again can in fact derive
  $\neg \neg \phi \rightarrow \phi$ for all bounded formulas when we
  use lemma \ref{lem:nointer}.

  For \ref{leastsubsupp}, note that any least subfinite support has to
  be in particular a subset of each finite support, and so also a
  subset of the intersection of all finite supports. But this would
  imply that the intersection of all finite supports is a
  support. Hence this part follows from part \ref{intsupp}. It will
  also follow from \ref{bintsubfin}, which we show next.
  
  For \ref{bintsubfin}, take $\psi$ to be any bounded formula such
  that $\neg \neg \psi \rightarrow \psi$ and take $\phi$ to be
  $\psi \vee \neg \psi$, then apply the last part of lemma
  \ref{lem:nointer}. If $S_1$ and $S_2$ are as in the statement of
  lemma \ref{lem:nointer} and $S_1 \cap S_2$ is a support of
  $\overline{a}$, then $\psi \vee \neg \psi$ holds. In particular,
  taking $\psi$ to be $(\forall n \in \nat)\,\alpha(n) = 0$ for
  arbitrary binary sequences $\alpha$ gives us $\wlpo$.

  For \ref{finsubsubfin}, note that if there are finite supports
  $S_1'$ and $S_2'$ such that $S_1' \subseteq S_1$ and
  $S_2' \subseteq S_2$ then $S_1' \cap S_2'$ would also be a finite
  support. However, $S_1' \cap S_2' \subseteq S_1 \cap S_2$, so this
  would imply that $S_1 \cap S_2$ is a support and so we can again
  apply lemma \ref{lem:nointer}. (Note also that $L$ in the proof of
  part \ref{intfin} is a subfinite support, and so that gives another
  proof.)
\end{proof}

\begin{remark}
  For \ref{intsupp}, \ref{leastfinsupp} and \ref{leastsubsupp} of
  theorem \ref{thm:main}, we could show not just $\wlpo$, but
  $\phi \vee \neg \phi$ for all bounded formulas $\phi$, which is much
  stronger. This schema is sometimes referred to as \emph{restricted
    excluded middle}, $\mathbf{REM}$.
\end{remark}

\begin{remark}
  Formally speaking, a subfinite subset $A$ of $\names$ is just a
  subset of $\names$ which is subfinite. We know that $A \subseteq F$
  for some finite set $F$. We don't necessarily have that $F$ is
  itself a subset of $\names$. However, if we take ``subfinite subset
  of $\names$'' to mean a set satisfying the stronger condition, that
  it is a subset of a finite set that is itself a subset of $\names$,
  then lemma \ref{lem:nointer} and so also theorem \ref{thm:main}
  still hold.
\end{remark}

\section{Acknowledgement}
This is based on work carried out at the University of Leeds under
EPSRC grant EP/K023128/1.

\bibliographystyle{abbrv}
\bibliography{mybib}{}

\end{document}